\def\lec{\lesssim}
\newcommand{\ps}[1]{\left( #1 \right)}
\newcommand{\av}[1]{\left| #1 \right|}
\def\ve{\epsilon}
\def\d{\partial}
\def\avint_#1{\mathchoice
{\mathop{\vrule width 6pt height 3 pt depth -2.5pt
\kern -8.8pt \intop}\nolimits_{#1}}%
{\mathop{\vrule width 5pt height 3 pt depth -2.6pt
\kern -6.5pt \intop}\nolimits_{#1}}%
{\mathop{\vrule width 5pt height 3 pt depth -2.6pt
\kern -6pt \intop}\nolimits_{#1}}%
{\mathop{\vrule width 5pt height 3 pt depth -2.6pt \kern -6pt
\intop}\nolimits_{#1}}}
\newcommand {\bt}{{\tilde{\beta}} }
\newcommand {\cM}{\mathcal {M}}
\newcommand{\Claim}{{\bf Claim}}
\newcommand {\side}{{\rm side}}
\newcommand{\diam}{{\rm diam}}
\newcommand{\dist}{{\rm dist}}
\newcommand{\vol}{{\rm {vol}}}
\newcommand {\bR}{\mathbb {R}}
\newcommand {\bN}{\mathbb{N}}
\newcommand{\eqn}[1]{\eqref{e:#1}}
\def\co{\overline{\mbox{co}}}
\def\md{{\rm md}}
\theoremstyle{plain}
\newtheorem{theorem}{Theorem}[section]
\newtheorem{lemma}[theorem]{Lemma}
\newtheorem{proposition}[theorem]{Proposition}
\newtheorem{remark}[theorem]{Remark}
\numberwithin{equation}{section}
\begin{document}

\title{A quantitative metric differentiation theorem}

\author{
Jonas Azzam\footnote{
	Dep. of Mathematics,
	Univ. of Washington,
	Box 354350,
	Seattle, WA 98195-4350,
	U.S.A.
} \ and Raanan Schul\footnote{
	Dep. of Mathematics,
	Stony Brook Univ.,
	Stony Brook, NY 11794-3651,
	U.S.A.}}
\maketitle

\begin{abstract}
The purpose of this note is to point out a simple consequence of some earlier work of the authors, ``Hard Sard: Quantitative implicit function and extension theorems for Lipschitz maps". 
For $f$, a Lipschitz function from a Euclidean space into a metric space, we give quantitative estimates for how often the pullback of the metric under $f$ is approximately a seminorm.
This is a quantitative version of Kirchheim's metric differentiation result from 1994. Our result  is in the form of a Carleson packing condition.\\
{\bf Mathematics Subject Classification (2000):} 26A16, 54E40, \\ 
{\bf Keywords:} Lipschitz map, metric differential, differentiability of Lipschitz maps, quantitative differentiation, Rademacher.
%
%
%
%
\end{abstract}

\section{Introduction}
Consider a function from Euclidean space into a metric space,  $f:\bR^{n}\rightarrow \cM$. Without loss of generality, we will assume $\cM=\ell^{\infty}$ which will ease some notation below.
Let $ \Delta(\bR^{n})$ be a collection of dyadic cubes  in $\bR^n$
and  $\Delta(R)=\{Q\in   \Delta(\bR^{n}),\ Q\subset R\}$.
For $Q\in \Delta(\bR^{n})$, 
$\side(Q)$ denotes its sidelength, $x_{Q}$ its center, and $3Q$ the cube with the same center but 3 times the sidelength. Define
\[\md(Q):=\frac1{\side(Q)}\inf_{||\cdot||} \sup_{x,y\in Q} \bigg| |f(x)-f(y)|-\|x-y\| \bigg|\]
where the infimum is taken over all seminorms $\|\cdot\|$ on $\bR^{n}$ {and $|\cdot|$ is the $\ell_{\infty}$ norm. }
A function $f$ as above is said to be $L$-Lipschitz if for all $x,y\in \bR^{n}$,
$$ \dist(f(x),f(y))\leq L|x-y|\,.$$
Our main theorem in this note is that the pull-back of of the distance function on $\cM$ under an $L$-Lipschitz function $f:\bR^{n}\rightarrow \cM$ is well approximated by a seminorm on most scales and in most locations. 
\begin{theorem}\label{t:main-theorem}
Let $f:\bR^{n}\rightarrow \cM$ be an $L$-Lipschitz function. Let $\delta>0$. Then
for each $R\in\Delta(\bR^{n})$
\[\sum\{\vol(Q):Q\in\Delta(R), \md(3Q)>\delta L\}\leq
 C_{\delta,n} \cdot \vol(R)\,.\]
The  constant $C_{\delta,n}$ does not  depend on the metric space $\cM$
or the function $f:\bR^n\to\cM$.
\end{theorem}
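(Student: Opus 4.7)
The plan is to derive the Carleson packing for $\md$ from a quantitative affine-approximation result for Lipschitz maps, which I would extract from our earlier \emph{Hard Sard} paper.

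After the routine normalizations (embed $\cM$ isometrically into $\ell^{\infty}$; rescale so $L=1$), the first observation is elementary: if on $3Q$ the map $f$ admits an affine approximation $A:\bR^{n}\to\ell^{\infty}$ with
\[
\sup_{x\in 3Q}|f(x)-A(x)| \leq \eta \cdot \side(Q),
\]
then $\|v\|_{A}:=|DA(v)|_{\ell^{\infty}}$ is a seminorm on $\bR^{n}$, and two applications of the triangle inequality give
\[
\bigl|\,|f(x)-f(y)|-\|x-y\|_{A}\,\bigr| \leq 2\eta \cdot \side(Q)
\qquad \text{for all } x,y\in 3Q.
\]
Hence $\md(3Q)\leq 2\eta$. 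So it suffices to show that the cubes admitting \emph{no} such affine approximation at tolerance $\eta=\delta/2$ form a Carleson family with constant depending only on $\delta$ and $n$.

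To obtain that Carleson family, I would invoke the stopping-time machinery developed in \emph{Hard Sard}, which, applied at each dyadic scale to $f$, produces a distinguished collection of cubes outside of which $f$ is well-approximated by an affine map into a controlled finite-dimensional subspace of $\ell^{\infty}$, together with a Carleson packing bound on the stopping cubes. Setting the tolerance parameter there equal to $\delta/2$ and combining with the elementary reduction above would yield Theorem~\ref{t:main-theorem}.

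The main obstacle, already confronted in \emph{Hard Sard}, is to obtain a single Carleson family that simultaneously handles \emph{all} directions of the infinite-dimensional target. Because $\ell^{\infty}$ lacks the Radon--Nikodym property, one cannot simply superimpose coordinatewise Dorronsoro-type estimates; one must instead run a stopping-time argument that, at each scale, isolates a finite number of coordinates controlling the Lipschitz oscillation of $f$ on the cube, and uses the uniform Lipschitz bound to charge the introduction of each new "witness" coordinate to a controlled portion of the volume. Once that machinery is granted, the present theorem is essentially a cosmetic rephrasing: an affine map into $\ell^{\infty}$ automatically induces a seminorm on $\bR^{n}$ via its derivative, so "good affine approximation" translates immediately to "small $\md$".
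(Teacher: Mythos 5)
There is a genuine gap here, and it is the central one. Your reduction runs the wrong way: a good affine approximation on $3Q$ does force $\md(3Q)$ to be small (via the seminorm $v\mapsto|DA(v)|_{\infty}$), but the converse fails badly for maps into $\ell^{\infty}$, so the family of cubes admitting \emph{no} affine approximation at tolerance $\delta/2$ can consist of \emph{all} cubes even when $\md\equiv 0$ --- and then no stopping-time argument can endow it with a Carleson packing bound, because the bound is simply false. Concretely, take $n=1$ and the Fr\'echet--Kuratowski embedding $f(t)=\big(|t-q_i|-|q_i|\big)_{i}$, where $\{q_i\}$ is dense in $[0,1]$. This is an isometry onto its image, so the pullback of the metric is exactly $|t-s|$ and $\md(Q)=0$ for every $Q$. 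Yet every interval $I$ contains some $q_i$ near its center, and any affine $A:\bR\to\ell^{\infty}$ has $\sup_{I}|f-A|_{\infty}\gtrsim |I|$, since already the single coordinate $t\mapsto|t-q_i|$ deviates from every affine function of $t$ by at least $|I|/8$ in sup norm on $I$. Thus the set of ``affinely bad'' cubes has infinite Carleson norm while the left-hand side of the theorem is zero. The machinery you attribute to \emph{Hard Sard} --- a Carleson family outside of which $f$ is approximately affine into $\ell^\infty$ --- does not exist there or anywhere, precisely because such a statement is false; the discussion of ``witness coordinates'' does not repair this.

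What the paper actually does is approximate not $f$ but only the pullback of the metric. The summable quantity is the $\bt$-number of $\tilde f(x)=(x,f(x))$ built from the triple functional $\d^{f}(x,y,z)=|f(x)-f(y)|+|f(y)-f(z)|-|f(x)-f(z)|$; smallness of $\bt_{\tilde f}(3Q^{N})$ says only that $f$ is approximately \emph{homogeneous along lines} and approximately translation invariant on $Q$ (Lemmas \ref{l:almost-isometric-on-lines} and \ref{l:shifting-lines}), which is strictly weaker than being approximately affine and is exactly what the isometric-embedding example above satisfies. One must then manufacture a seminorm from this weaker information: set $\sigma(x)=\sigma(0,x)$ as in \eqref{e:sigma-def}, form $D=\{x\in Q:\sigma(x)|x|=1 \mbox{ or } \sigma(x)=0\}$, show via Carath\'eodory's theorem and the approximate triangle inequality \eqref{e:t-ineq} that $C=\{\sigma(x)|x|\le 1\}$ is sandwiched between $(1-\alpha')\co D$ and $\co D$, and apply Kolmogorov's criterion to recognize the Minkowski functional of $\co D$ as a seminorm comparable to $\sigma(x)|x|$. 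That convexity step is the real content missing from your proposal; the Carleson estimate then follows from Theorem \ref{t:TST} and Chebyshev, plus the $\tfrac13$-trick to pass from $Q$ to $3Q$.
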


An estimate as above is typically called a Carleson packing condition in harmonic 
analysis or geometric measure theory (c.f. \cite{DS-analysis-of-and-on}). The name stems from the fact that  a Carleson packing on a collection of cubes usually 
arises from the stronger property of the existence of a Carleson measure on $[0,1]^{n}\times (0,\infty)$ (see \cite{BAF} or \cite{Big-Stein}). For example, Dorronsoro established in \cite{Dorronsoro-potential-spaces} that if $f$ is Lipschitz and
\[\Omega_{f}(x,t):=\inf \left\{ \avint_{B(x,t)} \frac{|f(y)-A(y)|}{t}dy: \;\; A:[0,1]^{n}\rightarrow \bR^{m}\mbox{ is affine}\right\}\]
then $\Omega_{f}(x,t)^{2}\frac{dxdt}{t}$ defines a Carleson measure. 
This implies Theorem \ref{t:main-theorem} in the case that $f$ maps into a Euclidean space and both its domain and range are equipped with the Euclidean norm. In fact, more is true: the theorem still holds if we replace $\md(Q)$ with $\alpha(Q)$ where $\alpha(Q)\side(Q):=\inf||f-A||_{L^{\infty}(Q)}$ and the infimum is taken over all affine mappings $A$ between the domain and range of $f$. In the argot of \cite{DS-analysis-of-and-on}, Theorem \ref{t:main-theorem} is the {\it weak approximation of Lipschitz functions by affine maps}, or the WALA property, which holds more generally for Lipschitz functions from uniformly rectifiable sets into a Euclidean space. For more information about Carleson packing conditions, the WALA property, and their applications to geometric measure theory, see \cite{Jones-TSP, DS,DS-analysis-of-and-on,DS00-regular-mappings,hardsard}.

Assaf Naor and Sean Li have recently developed estimates similar to that 
of Theorem 1.1 to estimate the size of the largest cube for which a Lipschitz 
map of a function into a super-reflexive space is approximately affine \cite{Li-Naor11}, strengthening an earlier infinitesimal  result of Bates, Johnson, Lindenstrauss, Preiss, and Schechtman \cite{BJLPS99}. Other authors who have been concerned with 
quantitative or coarse differentiation results include Jeff Cheeger, Alex Eskin, David Fisher, Irene Peng, and others (c.f. \cite{EFW07} and the references 
therein). 

There are many other references of this type which we 
omit here. A common thread to these is that one may view such statements as 
coarse or quantitative versions of Rademacher's theorem, which can be 
used to find a large scale where the function is approximately linear, or sufficiently regular in the 
appropriate sense. A Rademacher-type theorem for Lipschitz functions from Euclidean space into a general metric space was established by Kirchheim in terms of metric differentials.

\begin{theorem}\cite{Kirchheim-Rademacher}
If $f:[0,1]^{n}\rightarrow \cM$ is Lipschitz, where $\cM$ is a metric space, then for almost every $z\in [0,1]^{n}$, there is a seminorm $\|\cdot\|_{z}$ on $\bR^{n}$ such that 
\[ \bigg||f(x)-f(y)|-\|x-y\|_{z}\bigg| = o(|x-z|+|y-z|).\]
\end{theorem}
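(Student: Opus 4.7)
My plan is to derive Kirchheim's theorem as a soft corollary of Theorem~\ref{t:main-theorem}. Since $[0,1]^{n}$ is covered by finitely many unit dyadic cubes, it suffices to prove the conclusion on any fixed $R \in \Delta(\bR^{n})$. For each fixed $\delta > 0$, Theorem~\ref{t:main-theorem} gives that the total volume of cubes $Q \in \Delta(R)$ with $\md(3Q) > \delta L$ is at most $C_{\delta,n}\vol(R)$. Rewriting this via Fubini as
\[
\int_{R} \#\{k \geq 0 : \md(3Q_{k}(z)) > \delta L\}\, dz \leq C_{\delta,n}\vol(R),
\]
where $Q_{k}(z)$ denotes the dyadic cube of sidelength $2^{-k}$ containing $z$, and intersecting the resulting ``almost-all'' sets over $\delta = 1/j$ with $j \in \bN$, I obtain a full-measure set $E \subset R$ on which $\md(3Q_{k}(z)) \to 0$ as $k \to \infty$.

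Now fix $z \in E$. For each $k$ pick a seminorm $N_{k} = N_{k}^{z}$ on $\bR^{n}$ that approaches (up to a factor $1 + 1/k$) the infimum defining $\md(3Q_{k}(z))$. The $L$-Lipschitz property of $f$ forces $N_{k}(v) \leq C(n)L|v|$, so $\{N_{k}\}$ lies in a compact subset of the space of seminorms (with the topology of uniform convergence on the unit sphere). Extract a subsequence with $N_{k_{j}} \to N$; the limit $N$ is again a seminorm, and this $N$ is my candidate Kirchheim seminorm at $z$. For a pair $(x,y)$ near $z$, set $r = |x-z|+|y-z|$ and let $m = m(x,y)$ be minimal with $x,y \in 3Q_{m}(z)$, so that $\side(Q_{m})$ and $r$ are comparable with constants depending only on $n$. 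The triangle inequality then gives
\[
\big| |f(x)-f(y)| - N(x-y) \big| \leq 2\md(3Q_{m}(z))\side(Q_{m}) + |x-y|\,\|N_{m}-N\|_{\mathrm{op}},
\]
which is bounded by a constant (depending only on $n$) times $r\bigl[\md(3Q_{m}(z)) + \|N_{m}-N\|_{\mathrm{op}}\bigr]$. The first summand is $o(r)$ by the defining property of $z \in E$; the second \emph{would} be $o(r)$ if $N_{k} \to N$ along the \emph{full} sequence, not merely along the extracted subsequence.

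The main obstacle is therefore promoting subsequential convergence $N_{k_{j}} \to N$ to full-sequence convergence; equivalently, showing that all subsequential limits of $\{N_{k}\}$ agree. Specializing the previous display to $y = z$ and $x = z + tv$, any two subsequential limits $N, N'$ would yield two distinct limit values for $|f(z+tv)-f(z)|/t$ along two sequences $t \to 0^{+}$. Hence uniqueness reduces to the existence of the directional metric derivative $\lim_{t \to 0^{+}} |f(z+tv)-f(z)|/t$ at $z$ for each $v$ in a countable dense subset of $\bR^{n}$. This one-dimensional statement I would establish either by invoking the classical Rademacher-type results for Lipschitz curves into $\ell^{\infty}$ (coordinate-wise Rademacher together with a uniform-in-coordinates control), or more self-containedly by reapplying Theorem~\ref{t:main-theorem} to one-dimensional slices $t \mapsto f(z_{0}+tv)$ and using Fubini to transfer back to $n$ dimensions. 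A countable intersection over $v$ in a dense set and a Lipschitz extension in $v$ (via the uniform operator-norm bound $C(n)L$) then identify $N$ uniquely and finish the proof.
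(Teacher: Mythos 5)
The paper does not actually prove this statement: it is quoted from \cite{Kirchheim-Rademacher} purely as motivation, and Theorem \ref{t:main-theorem} is presented as a quantitative counterpart of it, not as a source from which it is rederived. So your proposal is an attempt to reverse the paper's logical direction, and must be judged on its own. Its architecture is sound up to the point you yourself flag: the Fubini/layer-cake step, the equi-Lipschitz bound $N_k(v)\lec_n L|v|$ and the resulting compactness, and the reduction of everything to full-sequence convergence of the $N_k$ are all correct (modulo writing ``minimal'' where you mean the maximal $m$ with $x,y\in 3Q_m(z)$). But the obstruction you identify --- distinct subsequential limits --- is precisely the content of Kirchheim's theorem that goes beyond any Carleson packing condition, and neither of your proposed routes closes it.

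Route (b) is circular. Applying Theorem \ref{t:main-theorem} to a one-dimensional slice tells you that for a.e.\ $t$ the difference quotients at scale $2^{-k}$ are within $o(1)$ of some constant $c_k\ge 0$; comparing consecutive scales gives only $|c_{k+1}-c_k|=o(1)$, which does not force $\{c_k\}$ to converge (consider $c_k=\sin(\log k)$). This is the identical oscillation phenomenon you are trying to rule out in dimension $n$, and no packing condition of the form in Theorem \ref{t:main-theorem} rules it out, since $C_{\delta,n}$ blows up as $\delta\to 0$ and therefore yields no summable rate for $\md(3Q_k(z))$. Route (a), as parenthesized, is also not right: for a curve $\gamma$ into $\ell^\infty$, coordinate-wise Rademacher gives $\liminf_{t\to 0}|\gamma(s+t)-\gamma(s)|_\infty/|t|\ge \sup_i|\gamma_i'(s)|$ but no matching bound on the limsup, because $\sup_i$ and $\lim_t$ do not commute; the coordinate derivatives do not determine the metric derivative.

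The correct one-dimensional input is the a.e.\ existence of the metric derivative of a Lipschitz curve, proved by applying the ordinary differentiation theorem to the countable family of real-valued $1$-Lipschitz functions $t\mapsto |f(z+tv)-w|$, with $w$ ranging over a countable dense subset of the separable image, and showing via a density-point argument that the metric derivative equals the supremum of their derivatives. That is exactly the first step of Kirchheim's own proof. If you import it, your scheme does assemble into a correct and rather natural proof (the quantitative theorem supplies the seminorm structure across directions, the one-dimensional differentiation supplies existence of the limit along each direction); as written, however, the essential analytic ingredient is missing rather than merely deferred.
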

The seminorm $\|\cdot\|_{z}$ is called the {\it metric differential} of $f$ at $z$.
In other words, for almost every point in the domain, the pullback of the metric under $f$ is well approximated by  some seminorm on the domain. Theorem \ref{t:main-theorem} now gives a more quantitative version of this result in the spirit of \cite{Dorronsoro-potential-spaces}. The work below  is a simple consequence of some techniques and ideas that were originally developed in and for  \cite{hardsard}. 

In Section \ref{s:preliminaries}, we recall the definition of $\bt$-numbers from \cite{Schul-lip-bilip} and the relevant lemmas from \cite{hardsard}. In Section \ref{s:proof-of-main-theorem}, we prove the main theorem.\\

\noindent {\bf Acknowledgements.} We thank John Garnett for his helpful discussions, Sean Li for pointing out \cite{Li-Naor11}, and Matthew Badger for carefully reading a draft. Jonas Azzam was supported by RTG grant DMS-0838212. Part of this note was written while Jonas Azzam was visiting the Mathematical Sciences Research Institute. Raanan Schul was supported by a fellowship from the Alfred P. Sloan Foundation and by NSF grant DMS 1100008.

\section{Preliminaries}
\label{s:preliminaries}

For $Q\in \Delta(\bR^n)$, let
$Q^{N}$ denotes the unique dyadic cube containing $Q$ of sidelength $2^{N}\side(Q)$. For functions $A(t_{1},...,t_{k})$ and $B(t_{1},...,t_{k})$, we write $A\lec B$ if there is a constant $C$ (independent of $(t_{1},...t_{k})$) such that $A\leq CB$. We will also write $A\lec_{t_{i}}B$ if the implied constant $C$ depends on $t_{i}$. 

\subsection{$\bt$-numbers}

For a Lipschitz function $f:[0,1]^{n}\rightarrow \cM$, define
\begin{equation*}
 \d^{f}(x,y,z):=|f(x)-f(y)| + |f(y)-f(z)| -|f(x)-f(z)|.
\end{equation*}
For an interval $I=[a,b]\subset\bR$, let 
\begin{equation*}
\bt_{f}(I)^2\diam(I)=\diam(I)^{-3}\int_{x=a}^{x=b}\int_{y=x}^{y=b}\int_{z=y}^{z=b}  \d^{f}(x,y,z)dzdydx\,.
\end{equation*}
Identify $\bR$ with $\{\bR,0,...,0\}\subset \bR^n$,  and let $G_n$ be the group of all rotations of $\bR$ in $\bR^n$ equipped with its Haar measure $d\mu$. Let  $dx$ be the $n-1$ dimensional Lebesgue measure on $\bR^n \circleddash g\bR$, the orthogonal complement of $g\bR$ in $\bR^n$.
For a cube $Q\in \bR^n$, define
the quantity $\bt^{(n)}_{f}(Q)$ by
\begin{eqnarray*}
&&\bt^{(n)}_f(Q)^2\side(Q)^{n-1}=\\
&&\phantom{xxx}
\int_{g\in G_n}\int_{x\in \bR^n \circleddash g\bR} \chi_{\{|(x+g\bR)\cap 7Q|\geq \side(Q)\}}\bt((x+g\bR)\cap 7Q)^2dxd\mu(g)\,.
\end{eqnarray*}

\begin{theorem}[\cite{Schul-lip-bilip}]\label{t:TST}
For an $L$-Lipschitz function $f:[0,1]^{n}\rightarrow\cM$ and $N$ a fixed integer,
\begin{equation*}
\sum_{Q\in \Delta, Q\subseteq [0,1]^n} \bt_{f}(3Q^{N})^2\vol(Q)  \lec_{N, n} L\,.
\end{equation*}
\label{t:bt-sum}
\end{theorem}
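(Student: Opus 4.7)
The plan is to prove Theorem \ref{t:TST} by reducing the $n$-dimensional $\bt$-sum to one-dimensional $\bt$-sums along lines, and handling the 1D case via a telescoping-chord argument in the spirit of Jones' traveling salesman theorem adapted to metric targets. Concretely, step (i): establish that for any $L$-Lipschitz $g:J\to\cM$ with $J\subset\bR$ an interval,
\[
\sum_{I\text{ dyadic},\,I\subset J}\bt_g(3I^N)^2|I|\lec_N L\cdot|J|.
\]
Step (ii): use the integral-geometric definition of $\bt^{(n)}_f$ together with Fubini to rewrite the $n$D sum as an average over lines of the 1D sums from step (i).

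For step (i), the key point is that $\d^g(x,y,z)\geq 0$ by the triangle inequality, with equality when $g(y)$ lies on a geodesic between $g(x)$ and $g(z)$. Thus $\bt_g(I)^2\diam(I)$ quantifies the average geodesic defect of $g|_I$. The telescoping argument is to compare, at each dyadic scale, the chord length $|g(a_I)-g(b_I)|$ to the sum of the chord lengths of the two dyadic children of $I$. The triangle-inequality defect at scale $I$ controls a constant times $\bt_g(I)^2|I|$, and summing defects across all scales in $J$ produces a telescoping bound by the total length of the image curve (via the monotone increase of sums of child chords), which in turn is at most $L\cdot|J|$ by the Lipschitz hypothesis. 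Inflating intervals from $I$ to $3I^N$ only enlarges each $\bt$ by a bounded ($N$-dependent) factor and introduces bounded overlap.

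For step (ii), by the definition of $\bt^{(n)}_f$,
\[
\sum_{Q\subset[0,1]^n}\bt^{(n)}_f(3Q^N)^2|Q|=\int_{G_n}\!\!\int_{x\in g\bR^\perp}\sum_{Q}\chi_{\{|\ell\cap 21Q^N|\geq\side(Q^N)\}}\bt_f(\ell\cap 21Q^N)^2\frac{|Q|}{\side(3Q^N)^{n-1}}\,dx\,d\mu(g),
\]
where $\ell:=x+g\bR$. For fixed $\ell$ and fixed sidelength $s=\side(Q)$, the cubes $Q\subset[0,1]^n$ with $\ell$ crossing $21Q^N$ nontrivially organize, with $O_N(1)$-bounded overlap, into groups projecting to a bounded collection of intervals of length comparable to $s$ along $\ell$; the factor $|Q|/\side(3Q^N)^{n-1}=s\cdot 2^{-N(n-1)}3^{-(n-1)}$ is exactly what converts $n$-volume to 1D length along $\ell$. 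Hence the inner sum is $\lec_{N,n}$ a 1D dyadic sum of $\bt_{f|_\ell}$ over dyadic intervals in $\ell\cap[0,1]^n$, which by step (i) is $\lec_N L\cdot|\ell\cap[0,1]^n|$. Integrating over $x$ in a fixed hyperplane and over $g$ in the compact group $G_n$, and using $\int|\ell\cap[0,1]^n|\,dx\lec 1$, yields the bound $\lec_{N,n}L$.

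The main obstacle is step (i): the telescoping identity relating $\d^f$ to chord defects is delicate because the target is only a metric space, so one cannot use orthogonal projections as in Jones' original argument. One must instead work directly with the Menger-type quantity $\d^f$ and carefully control how defects at nested scales accumulate, avoiding double-counting when passing to $3I^N$ in place of $I$. The Fubini step (ii) is essentially bookkeeping once step (i) is in hand, though the bounded-overlap count of dyadic cubes crossed by a line must be tracked to keep the constants $N$-dependent but finite.
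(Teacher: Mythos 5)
Two remarks before the substance: the paper itself contains no proof of Theorem \ref{t:TST} --- it is imported verbatim from \cite{Schul-lip-bilip} --- so there is no in-paper argument to compare against; and your two-step plan (a one-dimensional estimate for Lipschitz maps of an interval into a metric space, followed by the integral-geometric/Fubini reduction of the $n$-dimensional sum to lines) is in fact the strategy of the cited source. Your step (ii) is, as you say, bookkeeping: at a fixed scale the intervals $(x+g\bR)\cap 7(3Q^{N})$ coming from distinct cubes $Q$ have $\lec_{N,n}1$ overlap, the edge cubes where the line exits $[0,1]^{n}$ are handled by the trivial bound $\bt_{g}(I)^{2}|I|\lec L|I|$, and intervals of comparable length have comparable $\bt^{2}\cdot\mathrm{length}$; one small caution is that enlarging $I$ to $3I^{N}$ does not shrink $\bt$ by a bounded factor (the comparison goes the other way), so the reduction should be phrased as bounded overlap of the enlarged windows at each scale rather than ``inflation only enlarges $\bt$ boundedly''.

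The genuine gap is the central claim of step (i): that the chord defect of the single triple (left endpoint, midpoint, right endpoint) of a dyadic $I$ controls a constant times $\bt_{g}(I)^{2}|I|$. This per-interval domination is false, because $\bt_{g}(I)^{2}|I|$ is an \emph{average} of $\d^{g}(x,y,z)$ over all triples in $I$: one can arrange (already for real-valued $g$, e.g.\ $g(x)=x$ plus a small bump vanishing at the endpoints and midpoint of $I$) that the endpoint--midpoint--endpoint triple has zero defect while $\bt_{g}(I)>0$, so the telescoping of single chords cannot be launched from this inequality. The repair --- and what the actual proof does --- is to telescope \emph{averaged} quantities: setting $B(h)=\int|g(x+h)-g(x)|\,dx$, one has $2B(h)-B(2h)=\int\d^{g}(x,x+h,x+2h)\,dx$ up to boundary terms, each term is nonnegative, $B(h)\le Lh|J|$, and the sum over dyadic $h$ of $h^{-1}\bigl(2B(h)-B(2h)\bigr)$ telescopes to $\lec L|J|$; general triples are reduced to triples with comparable gaps via the superadditivity $\d^{g}(x,y,z)\le\d^{g}(x,w,z)+\d^{g}(w,y,z)$ (and its mirror image), after which $\bt_{g}(I)^{2}|I|$ is dominated by these averaged defects at finitely many scales comparable to $|I|$, and the sum over $I$ closes. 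Note also that the obstacle you flag --- the absence of orthogonal projections in a metric target --- is not where the difficulty lies: nonnegativity of $\d^{g}$ from the triangle inequality is all the geometry the telescoping needs, and the real issue is exactly the averaging over triples just described, which is present even when $g$ is real-valued.
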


\begin{remark}
If $\d^{f}(x,y,z)$ is small, this corresponds to $f(x),f(y)$ and $f(z)$ being close to lying on a geodesic. 
A crucial (and simple) observation in \cite{hardsard} was that if we define $\tilde{f}(x)=(x,f(x))$ as a map from $Q_{0}$ into $\bR^{n}\times\bR^{m}$, then $\d^{f}(x,y,z)$ being small corresponds to $f$ being approximately {\it affine} on $x,y,z$. 
Thus, for $\bt_{\tilde{f}}(Q)$ small enough, this gives that $f$ is approximately affine when restricted to $Q$. 
However, in the setting where $f$ maps into a metric space, if we interpret $\tilde{f}=(x,f(x))$ as a map from $\bR^{n}$ into $\bR^{n}\oplus \ell^{\infty}$, equipped with the metric $|(u,v)|=\sqrt{|u|_{2}^{2}+|v|_{\infty}^{2}}$, 
then $\bt_{\tilde{f}}(Q)$ being small corresponds to $f|_Q$ being approximately homogeneous on all lines. We will use this to prove Theorem \ref{t:main-theorem}. One should compare the proof with the usual proof of Rademacher's theorem, or the proof of Theorem 2 in \cite{Kirchheim-Rademacher}. 
\end{remark}

\subsection{Approximate Homogeneity}

Let $f:[0,1]^{n}\rightarrow\cM$ be $1$-Lipschitz. {Fix $\alpha\in (0,1)$ and $N\in \bN$ for now. The value for the constant $N$ is determined in Lemma \ref{l:shifting-lines}, with some dependencies, which are ultimately resolved in  Proposition \ref{p:beta-controls-md}, where $\alpha$ is also determined.}  
For $x,y\in 3Q$ with $|x-y|\geq \alpha\side(Q)$, define
\begin{equation}
\sigma(x,y)=
\inf_{x',y'\in L_{x,y}\cap 3Q^{N}\atop |x'-y'|\geq \alpha\side (Q)} \frac{|f(x')-f(y')|}{|x'-y'|}
\label{e:sigma-def}
\end{equation}
where $L_{x,y}$ is the line passing through $x$ and $y$. 

We recall some lemmas whose proofs can be found in \cite{hardsard}. Their proofs can be read independently of the rest of that paper.

\begin{lemma}\cite[Lemma 6.3 and following discussion]{hardsard}
Let $0<\alpha<1$ and $\ve'>0$ and $N\in\bN$. There is $\ve=\ve(N,\alpha,\ve')>0$ such that for any cube $Q$ with $\bt_{\tilde{f}}(3Q^N)<\ve$ 
we have
\begin{equation} \av{\frac{|f(x)-f(y)|}{|x-y|}-\sigma(x,y)}<\ve' 
\label{e:almost-isometric-on-lines}
\end{equation}
for all $x,y\in Q$ such that $|x-y|>\alpha\diam Q$.
\label{l:almost-isometric-on-lines}
\end{lemma}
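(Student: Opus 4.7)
The heart of the argument is that, for any line $\ell$ along which the one-dimensional coefficient $\bt_{\tilde f}(\ell\cap 21Q^N)$ is sufficiently small, the function $\tilde f|_\ell$ is close (in ratios of distances) to an isometric embedding of a segment scaled by some factor $c_\ell$; one then transfers this to $f$ and to the line $L_{x,y}$ itself.

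First, by Chebyshev's inequality applied to the definition of $\bt^{(n)}_{\tilde f}(3Q^N)$, if $\ve$ is chosen sufficiently small relative to an auxiliary threshold $\tau>0$, then in the parameter space $G_n\times(\bR^n\circleddash g\bR)$ the set of bad pairs $(g,\xi)$ with $\bt_{\tilde f}((\xi+g\bR)\cap 21Q^N)\geq\tau$ has arbitrarily small relative measure. One can therefore pick a ``good'' line $\ell$ whose direction differs from that of $L_{x,y}$ by an arbitrarily small angle and which passes within any prescribed small distance of the pair $x,y$. Note that $\ell\cap 21Q^N$ has length at least $\side(3Q^N)=3\cdot 2^N\side(Q)$, which is much larger than $\alpha\side(Q)$ once $N$ is large; this extra room is precisely what the parameter $N$ buys us.

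Next, smallness of $\bt_{\tilde f}(\ell\cap 21Q^N)$ says that the defect $\d^{\tilde f}(u,v,w)$ is small on average over ordered triples $u<v<w$ in the segment. For a pair $x',y'\in\ell$ with $|x'-y'|\geq\alpha\side(Q)$, the admissible extensions $z'\in\ell\cap 21Q^N$ with $|y'-z'|\geq\alpha\side(Q)$ sweep a set of length $\sim 2^N\side(Q)$, so pigeonhole produces many $z'$ with $\d^{\tilde f}(x',y',z')<\eta_1$ for any prescribed $\eta_1>0$ (once $\tau$ is small enough). Since $\tilde f$ is $\sqrt 2$-Lipschitz and bounded below by the identity on $\bR^n$, a chaining argument over admissible triples delivers a single constant $c_\ell\in[1,\sqrt 2]$ with
\[
\left|\frac{|\tilde f(x')-\tilde f(y')|}{|x'-y'|_2}-c_\ell\right|<\eta
\]
for every pair $x',y'\in\ell\cap 3Q^N$ with $|x'-y'|\geq\alpha\side(Q)$, where $\eta\to 0$ as $\tau\to 0$ (with $N,\alpha$ fixed).

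Finally, we transfer this to $f$ and to $L_{x,y}$. Squaring the identity $|\tilde f(a)-\tilde f(b)|^2=|a-b|_2^2+|f(a)-f(b)|^2$ shows that near-constancy of the $\tilde f$-slopes forces near-constancy of the $f$-slopes $|f(a)-f(b)|/|a-b|_2$ with common value within $O(\eta)$ of $\sqrt{c_\ell^2-1}$. For our chosen $x,y\in Q$ with $|x-y|\geq\alpha\diam Q$, and for any near-infimizing pair $\bar x,\bar y\in L_{x,y}\cap 3Q^N$ for $\sigma(x,y)$, choose corresponding points $x',y',\bar x',\bar y'\in\ell\cap 3Q^N$ at distance $o(\side(Q))$ from them; Lipschitzness of $f$ together with the lower bound $|x-y|\geq\alpha\side(Q)$ make the ratios stable under such perturbations, so both $|f(x)-f(y)|/|x-y|$ and $\sigma(x,y)$ lie within $\ve'/2$ of $\sqrt{c_\ell^2-1}$. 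The chief subtlety --- and the source of the dependence $\ve=\ve(N,\alpha,\ve')$ --- is the order in which the parameters must be selected: first $\eta$ from $\ve'$, then $\tau$ from $\eta,\alpha,N$, and finally $\ve$ from $\tau$ together with the angular/positional precision required to place $\ell$ near $L_{x,y}$.
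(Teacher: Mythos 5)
First, note that the paper itself does not prove this lemma: it is quoted verbatim from \cite{hardsard} (Lemma 6.3 there), and the authors explicitly defer to that reference. So there is no in-paper proof to compare against; your outline does follow the same general architecture as the cited source (Chebyshev to locate good lines near $L_{x,y}$, near-additivity of the pulled-back distance along a good line, then perturbation back to $L_{x,y}$), but two of its central steps are asserted rather than proved, and one of them is the entire content of the lemma.

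The first gap is the pigeonhole step: smallness of the triple integral defining $\bt_{\tilde f}(\ell\cap 21Q^N)$ does not give, for an \emph{arbitrary fixed} pair $x',y'$, many $z'$ with $\d^{\tilde f}(x',y',z')<\eta_1$; it only gives this for most pairs. You need a Chebyshev argument over pairs followed by a Lipschitz-perturbation step (using that $\d^{\tilde f}$ is $O(1)$-Lipschitz in each variable and that well-separated pairs form a set in which the good pairs are dense) to upgrade ``most'' to ``all.'' This is fixable but not automatic. The second and more serious gap is the sentence ``a chaining argument over admissible triples delivers a single constant $c_\ell$.'' The reasons you cite --- that $\tilde f$ is $\sqrt 2$-Lipschitz and bounded below by the identity --- are not sufficient: a distance $d(s,t)$ on a segment can be exactly additive (all defects zero) while having wildly non-constant ``slope'' between the bounds $|t-s|$ and $\sqrt2|t-s|$. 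What forces a single constant is the interplay between (i) near-additivity of $\sqrt{(t-s)^2+d_f(s,t)^2}$ where $d_f(s,t)=|f(\gamma(s))-f(\gamma(t))|$, (ii) the triangle inequality for $d_f$ in $\cM$, and (iii) strict convexity of the Euclidean norm on $\bR^2$: writing $A=(v-u,d_f(u,v))$ and $B=(w-v,d_f(v,w))$, one gets $|A|+|B|\approx|C|\le|A+B|\le|A|+|B|$, so $A$ and $B$ are quantitatively near-parallel, i.e.\ consecutive slopes of $f$ agree up to an error $\eta_2(\eta_1,\alpha)$. This is the heart of the lemma and cannot be waved through; moreover, chaining these comparisons across the $\sim 2^N/\alpha$ subsegments of $\ell\cap 3Q^N$ accumulates an error of order $(2^N/\alpha)\eta_2$, so you must record that $\eta_1$ (hence $\tau$, hence $\ve$) is chosen small depending on $N$ and $\alpha$ --- which is exactly why the lemma's $\ve$ depends on $N$, a dependence your final paragraph attributes only to the line-selection step.
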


In other words, if $\bt_{f}(3Q^N)$ is small, then $f$ is close to being homogeneous on lines. We remark that the constant $N$ here is only needed for consistency with the next lemma, Lemma \ref{l:shifting-lines}.

\begin{lemma}\cite[Corollary 6.5]{hardsard}
 For any $\rho>0$, there is $N=N(\rho,n)$ and $\ve=\ve(\alpha,\rho)$ such that if $\bt(3Q^{N})<\ve$, then 
if $x,y\in Q$ { are such that} $|x-y|\geq \alpha\side(Q)$, and { $z\in \bR^{n}$ is such that} $|z|\leq \side(Q)$,
\begin{equation}
\sigma(x+z,y+z)\leq \sigma(x,y)+\rho
\label{e:shift-sigma}
\end{equation}
and 
\begin{equation}
\bigg| |f(x+z)-f(y+z)| - |f(x)-f(y)|\bigg|<\rho\side(Q).
\label{e:shift-f}
\end{equation}
Moreover, if $\rho$ is small enough, depending on $\alpha$, and $x_{Q}=f(x_{Q})=0$, then for every $x,y\in Q$,
\begin{equation}
|f(x+y)|\leq |f(x)|+|f(y)|+\alpha\side(Q).
\label{e:t-ineq}
\end{equation}
\label{l:shifting-lines}
\end{lemma}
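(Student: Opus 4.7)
The plan is to establish (i) and (ii) together, since Lemma~\ref{l:almost-isometric-on-lines} makes them essentially equivalent, and then to derive (iii) from (ii) by the triangle inequality in $\ell^\infty$. The first step is to apply Lemma~\ref{l:almost-isometric-on-lines} with a parameter $\ve'\ll\rho$ to the two pairs $(x,y)$ and $(x+z,y+z)$, which are both legal inputs since $x,y\in Q$ and $x+z,y+z\in 2Q\subset 3Q^N$. Subtracting the two resulting estimates
\begin{equation*}
\left| \frac{|f(x)-f(y)|}{|x-y|} - \sigma(x,y) \right| < \ve', \qquad \left| \frac{|f(x+z)-f(y+z)|}{|x-y|} - \sigma(x+z,y+z) \right| < \ve'
\end{equation*}
turns the desired (i) into the one-sided inequality $|f(x+z)-f(y+z)| \le |f(x)-f(y)| + (\rho-2\ve')|x-y|$, and (ii) into the corresponding two-sided statement (after a slight redefinition of $\rho$). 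So (i) and (ii) both reduce to showing $\bigl||f(x+z)-f(y+z)|-|f(x)-f(y)|\bigr|$ is bounded by a small multiple of $\side(Q)$.

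This parallel-translation invariance of $|f|$-differences is the core step. The guiding idea, as noted in the Remark after Theorem~\ref{t:TST}, is that smallness of $\bt_{\tilde{f}}(3Q^N)$ forces the graph map $\tilde f(u)=(u,f(u))$ to be close to an affine map $\tilde A:\bR^n\to\bR^n\oplus\ell^\infty$ on $3Q^N$; because the first coordinate of $\tilde f$ is the identity, $\tilde A$ must have the form $u\mapsto(u,Au+c)$ with $A:\bR^n\to\ell^\infty$ linear, and for such $A$ one has the exact identity $|A(x+z)-A(y+z)|=|A(x)-A(y)|$. Quantitatively, I would realize this by running Lemma~\ref{l:almost-isometric-on-lines} on the four sides of the parallelogram $\{x,y,y+z,x+z\}$ together with one diagonal, say $[x,y+z]$, which furnishes a transversal on which to compare the $\sigma$-values of the two pairs of parallel sides. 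Keeping all five segments inside $3Q^N$ forces $N$ to grow with $\rho$ and $n$, and after bookkeeping the accumulated error one chooses $\ve=\ve(\alpha,\rho)$.

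For (iii), assume $x_Q=0=f(x_Q)$, fix $x,y\in Q$, and note $x+y\in 2Q\subset 3Q^N$. If $|y|\ge\alpha\side(Q)$, then part (ii) applied to the pair $(0,y)$ with shift $z=x$ (valid because $|x|\le\side(Q)$ for $x\in Q$) yields $\bigl||f(x+y)-f(x)|-|f(y)|\bigr|<\rho\side(Q)$, and the triangle inequality then gives
\begin{equation*}
|f(x+y)| \le |f(x)|+|f(x+y)-f(x)| \le |f(x)|+|f(y)|+\rho\side(Q).
\end{equation*}
If instead $|y|<\alpha\side(Q)$, the Lipschitz bound $|f(x+y)-f(x)|\le|y|<\alpha\side(Q)$ gives the estimate directly, since then $|f(x+y)|\le|f(x)|+\alpha\side(Q)\le|f(x)|+|f(y)|+\alpha\side(Q)$. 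Taking $\rho\le\alpha$ merges the two cases into (iii). The main obstacle in the whole argument is the parallelogram invariance of the second paragraph: Lemma~\ref{l:almost-isometric-on-lines} is a one-line statement, whereas (ii) demands a comparison across parallel lines, and the transverse-diagonal argument together with the careful bookkeeping of $(N,\ve,\ve',\alpha,\rho)$ is where the $\bt_{\tilde{f}}$ hypothesis is genuinely used.
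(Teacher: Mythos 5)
First, note that this paper does not prove this lemma at all: it is imported verbatim from \cite[Corollary 6.5]{hardsard} (the text says explicitly that the proofs ``can be found in \cite{hardsard}''), so there is no internal argument to compare yours against; I am judging your sketch on its own merits.

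Your reduction of \eqref{e:shift-sigma} to \eqref{e:shift-f} via Lemma \ref{l:almost-isometric-on-lines}, and your derivation of \eqref{e:t-ineq} from \eqref{e:shift-f} with the pair $(0,y)$ and shift $z=x$ (splitting on $|y|\gtrless\alpha\side(Q)$), are essentially right, modulo the technical point that the shifted points lie in $2Q$ rather than $Q$. The genuine gap is exactly the step you call the core one, and both of your justifications for it fail. (a) Your guiding idea --- that $\bt_{\tilde f}(3Q^N)$ small forces $\tilde f$ to be close to an \emph{affine} map into $\bR^n\oplus\ell^\infty$ --- is false, and it is precisely the distinction the paper's Remark after Theorem \ref{t:TST} is drawing: in a metric ($\ell^\infty$) target, small $\bt$ only gives approximate \emph{homogeneity on each line}, not affinity. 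Concretely, $f(t)=(t,\phi(t),0,\dots)$ with $\phi$ an arbitrary $1$-Lipschitz function is an isometric embedding of $\bR$ into $\ell^\infty$, so $\d^f\equiv 0$ and $\bt_{\tilde f}\equiv 0$, yet $f$ is nowhere near affine; if your heuristic were valid, Kirchheim's theorem and this whole paper would be trivial. (b) The parallelogram-plus-diagonal argument cannot close. The hypothesis $\bt(3Q^N)<\ve$ only constrains \emph{collinear} triples, i.e.\ gives homogeneity along individual lines; the two parallel sides $[x,y]$ and $[x+z,y+z]$ lie on distinct parallel lines whose $\sigma$-values are a priori unrelated (that is the very statement to be proved), the diagonal $[x,y+z]$ meets each of them in a single point so it contributes no collinear triple linking them, and the remaining triangle inequalities are one-sided and do not pin anything down. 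Moreover the sides $[x,x+z]$ and $[y,y+z]$ have length $|z|$, which may be far below the $\alpha\side(Q)$ threshold needed to invoke Lemma \ref{l:almost-isometric-on-lines}. Finally, your stated reason for taking $N$ large (``keeping all five segments inside $3Q^N$'') is vacuous: the entire parallelogram already sits inside $3Q$. The true role of $N=N(\rho,n)$ is that the comparison of two parallel lines requires collinear comparisons reaching out to scale $2^N\side(Q)$ (a transversal meeting both lines at a very shallow angle, or equivalently far-away points on the extended lines, so that thin-triangle errors are $O(2^{-N})$); this long-range mechanism, which is what \cite{hardsard} actually runs, is entirely absent from your sketch.
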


\begin{remark}
It is easy to show that the function $\sigma(x,y)$ is continuous on $\{(x,y)\in Q\times Q: |x-y|\geq \alpha \side Q\}$.
\label{r:continuous}
\end{remark}


\section{Proof of Theorem \ref{t:main-theorem}}
\label{s:proof-of-main-theorem}

We first recall an easy consequence of Carath\'eodory's convex hull theorem \cite[Theorem 1.2.3, p. 6]{Matousek}.
\begin{lemma}
Let $K$ be a set in $\bR^{n}$ and $\co K$ its closed convex hull. Then $\co K=\overline{\breve{K}}$, where
\[ \breve{K}:= \left\{\sum_{j=0}^{n}a_{j}x_{j}:x_{j}\in K,0\leq a_{j}\leq 1,\sum_{j=0}^{n} a_{j}=1\right\}.\]
That is, $\co K$ is the closure of all convex combinations of at most $n+1$ points from $K$.
\label{l:Carath}
\end{lemma}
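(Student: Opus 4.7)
The plan is to identify $\breve{K}$ with the ordinary (not necessarily closed) convex hull of $K$, and then deduce the stated equality by taking closures. Writing $\mathrm{conv}(K) := \{\sum_{j=1}^{m} a_{j}x_{j}:m\geq 1,\ x_{j}\in K,\ a_{j}\geq 0,\ \sum_{j} a_{j}=1\}$ for the convex hull, we have $\co K = \overline{\mathrm{conv}(K)}$ by definition of the closed convex hull, so it suffices to prove $\mathrm{conv}(K)=\breve{K}$.

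First I would observe that the inclusion $\breve{K}\subseteq \mathrm{conv}(K)$ is immediate, since every element of $\breve{K}$ is by construction a convex combination of finitely many (namely at most $n+1$) points of $K$. For the reverse inclusion, I would invoke the classical Carathéodory convex hull theorem as cited from \cite{Matousek}: any point which is a convex combination of finitely many elements of a set $K\subseteq \bR^{n}$ can be rewritten as a convex combination of at most $n+1$ of those elements. Applied to an arbitrary $p\in \mathrm{conv}(K)$, this places $p$ in $\breve{K}$, giving $\mathrm{conv}(K)\subseteq \breve{K}$.

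Combining the two inclusions yields $\mathrm{conv}(K)=\breve{K}$, and taking closures gives
\[ \co K \;=\; \overline{\mathrm{conv}(K)} \;=\; \overline{\breve{K}}, \]
which is exactly the claimed identity. There is no genuine obstacle to overcome here: the only substantive input is Carathéodory's theorem itself, which is being used as a black box, and the rest is a direct unpacking of definitions together with the elementary fact that the closed convex hull is the closure of the convex hull.
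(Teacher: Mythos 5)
Your argument is correct and is exactly the intended one: the paper states this lemma without proof as an immediate consequence of Carath\'eodory's theorem, which gives $\mathrm{conv}(K)=\breve{K}$, and taking closures (using that the closed convex hull is the closure of the convex hull) finishes the job. Nothing further is needed.
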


%
%
%

We are now ready to state and prove the main tool.
\begin{proposition}\label{p:beta-controls-md}
Let $\delta>0$ and $f:[0,1]^{d}\rightarrow \cM$ be $1$-Lipschitz. Then there is $\ve>0$ and $N\in \bN$ such that if $\bt_{\tilde{f}}(3Q^{N})<\ve$ then $\md(Q)<\delta$. 
\end{proposition}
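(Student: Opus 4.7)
The plan is to construct a positively homogeneous candidate function $\Phi:\bR^{d}\to[0,\infty)$ recording the slopes of $f$ along line segments, show it is close to $|f(x)-f(y)|$ on $Q$ and nearly subadditive, and then convexify it via Carath\'eodory's theorem (Lemma~\ref{l:Carath}) to produce an honest seminorm with the required accuracy.

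First I would normalize so that $\side(Q)=1$, $x_Q=0$, and (after replacing $f$ by $f-f(x_Q)$, which is permitted because $\cM=\ell^\infty$) $f(0)=0$. Given $\delta>0$, I pick $\alpha,\rho,\varepsilon'$ all sufficiently small (in a way depending on $d$ and $\delta$ determined in the last step), invoke Lemma~\ref{l:shifting-lines} with these $\alpha,\rho$ to produce an integer $N$, and then Lemma~\ref{l:almost-isometric-on-lines} with this $N$ to produce the final $\varepsilon$. For $\theta\in S^{d-1}$ set $s(\theta):=\sigma(0,\alpha\theta)$, the slope along the line through the origin in direction $\theta$, and then define $\Phi(v):=|v|\,s(v/|v|)$ for $v\neq 0$ and $\Phi(0):=0$, so that $\Phi$ is positively $1$-homogeneous, symmetric, and $\Phi\leq L|\cdot|$.

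For $x,y\in Q$ with $|x-y|\geq\alpha$, Lemma~\ref{l:almost-isometric-on-lines} gives $\bigl||f(x)-f(y)|/|x-y|-\sigma(x,y)\bigr|<\varepsilon'$, and iterating \eqn{shift-sigma} (to shift $L_{x,y}$ to the parallel line through the origin in $\lec_d 1$ steps of magnitude $\leq\side(Q)$) gives $|\sigma(x,y)-s(\widehat{x-y})|\lec_d\rho$. Multiplying by $|x-y|\leq\sqrt d$ and combining with the easy case $|x-y|<\alpha$ (where both quantities are at most $\alpha$) yields $\bigl||f(x)-f(y)|-\Phi(x-y)\bigr|\lec_d\alpha+\rho+\varepsilon'$. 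Next I would establish approximate subadditivity,
\[\Phi(u+v)\leq\Phi(u)+\Phi(v)+\eta(|u|+|v|),\qquad\eta:=C_{d}(\alpha+\rho+\varepsilon'),\]
by homogeneously scaling so $u,v\in\frac14 Q$ (forcing $u+v\in\frac12 Q\subset Q$), translating $\Phi$ to $|f(\cdot)|$ via the previous estimate, and applying \eqn{t-ineq} in the form $|f(u+v)|\leq|f(u)|+|f(v)|+\alpha$.

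To obtain a genuine seminorm I would convexify: Lemma~\ref{l:Carath} applied to the epigraph of $\Phi$ in $\bR^{d+1}$ (combined with positive homogeneity) shows that the convex envelope is
\[\|v\|:=\inf\left\{\sum_{i=0}^{d}\Phi(w_{i}):v=\sum_{i=0}^{d}w_{i}\right\};\]
being positively homogeneous, convex, and even, $\|\cdot\|$ is a seminorm dominated by $\Phi$. Iterating the approximate subadditivity $d$ times along a near-optimal decomposition yields $\Phi(v)\leq\sum\Phi(w_{i})+d\,\eta\sum|w_{i}|$, and provided $\sum|w_{i}|$ is controlled on the optimum, combining with the earlier estimate gives $\bigl||f(x)-f(y)|-\|x-y\|\bigr|<\delta$ on $Q$, hence $\md(Q)<\delta$. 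The main obstacle is controlling $\sum|w_{i}|$, since the decomposition vectors may be unbounded in directions where $s$ vanishes. I would resolve this by replacing $\Phi$ with the perturbation $\Phi_{\tau}(v):=\Phi(v)+\tau|v|$ for a small $\tau\sim\sqrt\eta$, which is still $\eta$-approximately subadditive and is bounded below by $\tau|v|$, forcing $|w_{i}|\leq\Phi_{\tau}(w_{i})/\tau$ on the optimum and ultimately yielding $\bigl|\Phi(v)-\|v\|\bigr|\lec_{d}\sqrt\eta\,|v|$. Choosing $\alpha,\rho,\varepsilon'$ of order $(\delta/C_{d})^{2}$ then closes the argument.
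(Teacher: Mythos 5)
Your proposal is correct, and its first half coincides with the paper's argument: after the same normalization, the paper also works with the positively homogeneous, even gauge $\Phi(v)=\sigma(0,v)|v|$, uses Lemma \ref{l:almost-isometric-on-lines} together with \eqref{e:shift-sigma}--\eqref{e:shift-f} to get $\big||f(x)-f(y)|-\Phi(x-y)\big|\lec_{n}\alpha$ on $Q$, and uses \eqref{e:t-ineq} as the source of approximate subadditivity. Where you genuinely diverge is the convexification step. The paper convexifies at the level of sets: it takes $D$, essentially the unit sphere of $\Phi$ inside $Q$ together with the degenerate directions, and defines $\|\cdot\|$ as the Minkowski functional of $\co D$ (a seminorm by Kolmogorov's criterion); the convexification error is controlled by applying \eqref{e:t-ineq} to an $(n+1)$-term Carath\'eodory combination of points of $D$, and since $D\subset Q$ those points are automatically bounded, so the loss is linear in $\alpha$. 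You convexify at the level of functions, via the convex envelope $\inf\{\sum\Phi(w_i):\sum w_i=v\}$, which forces you to confront exactly the unboundedness of near-optimal decompositions along degenerate directions that the paper sidesteps; your repair by passing to $\Phi+\tau|\cdot|$ with $\tau\sim\sqrt{\eta}$ is sound and is the standard way to show that a homogeneous, $\eta$-approximately subadditive function lies within $O(\sqrt{\eta})$ of a seminorm --- the $\sqrt{\eta}$ rather than $\eta$ loss is harmless since $\eta$ is at your disposal. Two details to tighten: your two-sided bound $|\sigma(x,y)-s(\widehat{x-y})|\lec_{n}\rho$ requires applying \eqref{e:shift-sigma} to pairs such as $(x-y,0)$ that may leave $Q$ (so either invoke the robustness of Lemma \ref{l:shifting-lines} under bounded dilates, or get one direction from \eqref{e:shift-f} plus \eqref{e:almost-isometric-on-lines} as the paper does); and \eqref{e:t-ineq} requires $\rho$ small depending on $\alpha$, so the parameters $\alpha,\rho,\ve'$ cannot be chosen entirely independently (the paper takes $\rho=\ve'=\alpha$).
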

\begin{proof}
When rescaling the domain and range by the same factor, the Lipschitz constant of $f$, $\md$, and $\bt_{\tilde{f}}$ remain unchanged, thus we may assume that $Q$ centered at zero with $\side(Q)=1$ and $f(0)=0$. Fix $\alpha>0$, which will be specified later, and pick $\ve>0$ and $N$ so that the conclusions of Lemmas \ref{l:almost-isometric-on-lines} and \ref{l:shifting-lines} hold for $\ve'=\rho=\alpha$. 

Let $\sigma(x)=\sigma(0,x)$.  Set
$$C=\{x\in Q:\sigma(x)|x|\leq 1 \}$$
and
$$D=\{x\in Q:\sigma(x)|x|=1\mbox{ or } \sigma(x)=0\}\,.$$

For $t>0$ and $A\subset \bR^n$, define $tA=\{tx:x\in A\}$.

\noindent\Claim: For $\alpha'=2(n+1)\alpha$, 
\begin{equation} 
(1-\alpha')\co D\subseteq C\subseteq \co D.
\label{e:1-alpha'}
\end{equation}
The right containment is clear by the definition of $D$ and $\sigma(x)$ being constant on any given $1$-dimensional vector space (in particular, $C$ is a star-shaped subset of $\co D$).  To see the left containment, we proceed as follows.

{  Note that $C$ is closed because $\sigma$ is continuous (see Remark \ref{r:continuous}), thus by Lemma \ref{l:Carath}, it suffices to show that $(1-\alpha')\breve{D}\subseteq C$. In this vein, let $x\in (1-\alpha')D$, so that 
\[x=\sum_{j=0}^{n}a_{j}x_{j}\in \breve{D}, \mbox{ where }x_{j}\in (1-\alpha')D, \;\; a_{j}\geq0, \mbox{ and }\sum_{j=0}^{n}a_{j}=1.\] }


 Let $0\leq k\leq n$ be { the maximal integer} so that (after reordering the $x_j$'s) 
\begin{equation}
j\leq k\mbox{ implies }|x_{j}|\leq \alpha.
\label{e:j<k}
\end{equation}
Then by \eqn{t-ineq} and the definition of $\sigma$, 
\begin{multline*}
|x|\sigma(x) 
 \leq |f(x)-f(0)|
 =\av{f\ps{\sum_{j=0}^{n}a_{j}x_{j}}} 
\stackrel{\eqref{e:t-ineq}}{\leq} (n+1)\alpha+ \sum_{j=0}^{n}|f(a_{j}x_{j})| \\
 \stackrel{\eqref{e:j<k}}{\leq} (n+1)\alpha+(k+1)\alpha+\sum_{j=k+1}^{n}|f(a_{j} x_{j})|
  \stackrel{\eqref{e:almost-isometric-on-lines}}{\leq} 2(n+1)\alpha +\sum_{j=k+1}^{n}\sigma(a_{j} x_{j})|a_{j}x_{j}|\\
 \leq 2(n+1)\alpha+\sum_{j=k+1}^{n}a_{j}\sigma(x_{j})| x_{j}|
 \leq 2(n+1)\alpha+\sum_{j=0}^{n}a_{j}(1-\alpha')\leq 1
\end{multline*}
where in the last two inequalities, we used the facts that $\sigma\leq 1$ since $f$ is $1$-Lipschitz, $\sum_{j=0}^{n}a_{j}=1$, and $\alpha'=2(n+1)\alpha$. This proves the claim.\\

Let
\[\|x\|=\inf\{ t\geq 0: x\in t\co D\}.\]
By Kolmogorov's theorem \cite{kolmogoroff-seminorms} (see also \cite[Section II, p.20, Prop 23]{Bourbaki-TVS}),  since $\co D$ is convex and $-\co D=\co D$ (since $-D=D$), we have that $\|\cdot\|$ is a seminorm. By \eqn{1-alpha'},
\begin{equation}
\|x\|\leq \sigma(x)|x|\leq \frac{1}{1-\alpha'}\|x\|.
\label{e:sigma-and-norm}
\end{equation}
Let $x,y\in Q$. Since $f$ is $1$-Lipschitz, we have  $\sigma\leq 1$. By \eqn{sigma-and-norm}

\begin{equation} \|x-y\|\leq \sigma(x-y)|x-y|\leq |x-y|.
\label{e:|and||}
\end{equation}
Suppose $|x-y|>\alpha$, then

\begin{multline*}
|f(x)-f(y)|
\stackrel{\eqn{shift-f}}{\leq} |f(x-y)|+\alpha
\stackrel{\eqn{almost-isometric-on-lines}}{\leq} \sigma(x-y)|x-y|+2\alpha \\
\stackrel{\eqn{sigma-and-norm}}{\leq} \frac{1}{1-\alpha'}\|x-y\|+2\alpha\,,
\end{multline*}
and by 
Lemma \ref{l:shifting-lines}
\begin{equation}
\|x-y\| \stackrel{\eqn{sigma-and-norm}}{\leq} \sigma(x-y)|x-y| { \stackrel{\eqn{shift-sigma}}{\leq} (\sigma(x,y)+\rho)|x-y|\stackrel{\eqn{sigma-def}}{\leq}  |f(x)-f(y)| + \alpha\sqrt{n}\,. 		}
\end{equation}
{ (recall that $\side Q=1$ so that $\diam Q=\sqrt{n}$ and earlier we picked $\rho=\alpha$).}

If $|x-y|\leq \alpha$, then,  by $\eqn{|and||}$, $||x-y||\leq \alpha$. Furthermore, since $f$ is $1$-Lipschitz, $|f(x)-f(y)|\leq |x-y|\leq \alpha$. Thus, the difference between $||x-y||$ and $|f(x)-f(y)|$ is at most $\alpha$.

Combining the above estimates, 
and using the fact that $\|x-y\|\leq |x-y|\leq\diam Q =\sqrt{n}$, 
we have that for all $x,y\in Q$, 
\[\|x-y\|-\alpha\leq |f(x)-f(y)| \leq \frac{\|x-y\|}{1-\alpha'}+(2+\sqrt{n})\alpha  \leq \|x-y\|+\frac{\sqrt{n}\alpha'}{1-\alpha'}+(2+\sqrt{n})\alpha .\]
By choosing  $\alpha$ so that $\frac{\sqrt{n}\alpha'}{1-\alpha'}+2\alpha  { + \sqrt{n}\alpha} <\delta$ (recall that $\alpha'=2(n+1)\alpha$), we conclude the proof.

\end{proof}

\begin{proof}[Proof of Theorem \ref{t:main-theorem}]
Let $\delta>0$ be given and $R\in\Delta$. By rescaling and translating the domain, we may assume $R=[0,1]^{n}$. Note that $\frac{\md }{L}$ is invariant under such a transformation. By rescaling the metric, we may also assume that the Lipschitz constant of $f$ is $1$ and, again, this will not affect 
$\frac{\md }{L}$.
Then if $\ve=\ve(\delta)$ is as in the previous proposition, we have by Theorem \ref{t:TST}
\begin{multline}
\sum\{\vol(Q):Q\in \Delta([0,1]^n), \md(Q)>\delta\}\\
\leq \sum\{\vol(Q):Q\in \Delta([0,1]^n), \bt_{\tilde{f}}(3Q^{N})>\ve\} \\
\leq \frac{1}{\ve^{2}}\sum_{Q\in \Delta([0,1]^n)}\bt_{\tilde{f}}(3Q^{N})^{2}\vol(Q)\lec_{N,n}\frac{1}{\ve^{2}}\lec_{\delta}1.
\label{e:almost}
\end{multline}
By the standard $\frac{1}{3}$-trick (c.f. \cite[p. 339-340]{Okikiolu-TST}), {any cube $3Q$, 
of $\side(Q)\leq \frac16$
is contained in a cube of the form $R+v$ where $v\in \{0,\pm\frac{1}{3}\}^{n}$, $R\in \Delta([0,1]^n)$, and $\side(R)\lec_{n} \side(Q)$. Note that $\md(3Q)\lec_{n} \md(R+v)$ and $\vol (Q)\sim_{n} \vol (R)$.  Hence, we can apply \eqn{almost} to $f$ with respect to each of these translated grids to obtain
\begin{multline*}
\sum\{\vol(3Q):Q\in \Delta([0,1]^n), \md(Q)>\delta\}\\
\lec_{n} 
1+ \sum_{v\in \{0,\pm \frac{1}{3}\}^{n}}\sum\{\vol(R):R\in \Delta([0,1]^n), \md(R+v)>\delta\}\\
\lec_{n,\delta}  
1+ \sum_{v\in \{0,\pm \frac{1}{3}\}^{n}}1 \lec_{n} 1
\end{multline*}
}

\end{proof}

\bibliographystyle{amsalpha}
\bibliography{quant-kirch-ref}

\end{document}